\date{}
\renewcommand{\uppercasenonmath}[1]{}
\theoremstyle{plain}
\newtheorem{theorem}{Theorem}[section]
\newtheorem{proposition}[theorem]{Proposition}
\newtheorem{lemma}[theorem]{Lemma}
\newtheorem{corollary}[theorem]{Corollary}
\newtheorem{example}[theorem]{Example}
\newtheorem*{open question}{Open Question}
\theoremstyle{definition}
\theoremstyle{remark}
\newtheorem{remark}[theorem]{Remark}
\newcommand{\Prufer}{Pr\"{u}fer}
\newcommand{\Id}{\mathrm{Id}}
\def\GV{{\rm GV}}
\def\Hom{{\rm Hom}}
\def\Ext{{\rm Ext}}
\def\Ker{{\rm Ker}}
\def\Im{{\rm Im}}
\def\Coker{{\rm Coker}}
\def\GV{{\rm GV}}
\def\Id{{\rm Id}}
\begin{document}
\begin{center}
{\large  \bf A note on the localizations of generalized injective modules}

\vspace{0.5cm} Xiaolei Zhang$^{a}$

{\footnotesize  $a$.\ School of Mathematics and Statistics, Shandong University of Technology, Zibo 255049, China\\}
\end{center}

\bigskip
\centerline { \bf  Abstract}
\bigskip
\leftskip10truemm \rightskip10truemm \noindent

Let $R$ be a ring and $S$  a multiplicative subset of $R$.  In this note, we study the localization of $S$-injective modules and  $u$-$S$-injective modules under $S$-Noetherian rings and $u$-$S$-Noetherian rings, respectively. The $u$-$S$-absolutely pure property is showed to be preserved under localizations over $S$-coherent rings. Besides, we give an example to show the difference between $S$-injective modules and  $u$-$S$-injective modules, and some counter-examples to deny some questions proposed in \cite{B24}.
\vbox to 0.3cm{}\\
{\it Key Words:}    $u$-$S$-injective module, $u$-$S$-absolutely pure module, $u$-$S$-Noetherian ring, $u$-$S$-coherent ring, localization.\\
{\it 2020 Mathematics Subject Classification:} 13B30, 13C11.

\leftskip0truemm \rightskip0truemm
\bigskip

\section{Introduction}
Throughout this note, $R$ is always  a commutative ring with identity and $S$ is always a multiplicative subset of $R$, that is, $1\in S$ and $s_1s_2\in S$ for any $s_1\in S$ and any $s_2\in S$.

It is well-known that the localization $E_S$ of any injective $R$-module $E$ is an injective $R_S$-module when $R$ is a Noetherian ring (see \cite[Theorem 4.88]{R09}). This result was extended to several classical rings, such as, polynomial rings with countable variables over Noetherian rings (see \cite[Corollary 17]{D81}), hereditary rings(\cite[Proposition 2]{C06})
, $h$-local \Prufer\  domain(\cite[Corollary 8]{C06}) and \Prufer\  domain of finite character(see \cite[Theorem 10]{C09}). However, this result is not generally  true, see \cite[Theorem 25]{D81} for some polynomial rings with uncountable variables with coefficients,  \cite[Theorem 28]{D81} for some quotient rings of polynomial rings with countable variables, and \cite[Example 1]{C06} for some von Neumann regular rings.

Generalizations of Noetherian rings include $S$-Noetherian rings \cite{ad02}, $u$-$S$-Noetherian rings\cite{QKWCZ21} and  $w$-Noetherian rings \cite{YWZC11}. It is also very interesting to study the localization of the relative injective modules over these generalizations of Noetherian rings. Kim and Wang \cite{KW13} showed that each localization of any
$\GV$-torsion-free injective module is injective over $w$-Noetherian rings. Recently, Baeck \cite{B24} obtain that each localization of any
$S$-injective module is $S$-injective over Noetherian rings, and asked that whether it is true for $S$-Noetherian rings (see \cite[Question 3.11]{B24})? We give a negative answer to this question in this note (see Example \ref{ex-311}). Qi et al.\cite{QKWCZ21} introduced the notions of $u$-$S$-Noetherian rings and $u$-$S$-injective modules. We investigate  the localization of 
$u$-$S$-injective modules  over $u$-$S$-Noetherian rings under some assumptions (see Theorem \ref{usn-loc}). In 1982, Couchot  proved that the each localization of any absolutely pure module is absolutely pure over coherent rings(see \cite[Proposition 1.2]{C82}). Similarly, we obtain that each localization of any $u$-$S$-absolutely pure module is $u$-$S$-absolutely pure over $u$-$S$-coherent rings (see Theorem \ref{usc-loc}). We also give some counter-examples to deny some questions proposed in \cite{B24}.

\section{Preliminaries}

 Recall from  \cite{ad02} that a ring $R$ is called an \emph{$S$-Noetherian ring} if every ideal $I$ of $R$ is $S$-finite with respect to some $s\in S$, that is, there is a finitely generated subideal $K$ of $I$  such that $sI\subseteq K$. Note that, in the definition of $S$-Noetherian rings, the choice of $s\in S$ such that ``$I$ is $S$-finite with respect to  $s\in S$''   is dependent on the ideal $I$. This dependence sets many obstacles to the further study of $S$-Noetherian rings. To overcome this difficulty, Chen et al. \cite{QKWCZ21} introduced the notion of $u$-$S$-Noetherian rings.  A ring $R$ is \emph{$u$-$S$-Noetherian} $($abbreviates uniformly $S$-Noetherian$)$ provided that there exists  $s\in S$ such that for any ideal $I$ of $R$ is $S$-finite with respect to  $s$. Examples of $u$-$S$-Noetherian rings which are not $S$-Noetherian are given in \cite{QKWCZ21}.

 Recall from \cite{z24usc} that an  $R$-module $M$ is called
 \emph{$u$-$S$-finitely presented} (abbreviates \emph{uniformly  $S$-finitely presented}) (with respect to $s$) provided that there is an exact sequence $$0\rightarrow T_1\rightarrow F\xrightarrow{f} M\rightarrow T_2\rightarrow 0$$ with $F$ finitely presented and $sT_1=sT_2=0$.   $R$ is called a
 \emph{$u$-$S$-coherent ring} (abbreviates uniformly  $S$-coherent) (with respect to $s$) provided that there is $s\in S$ such that it is $S$-finite with respect to $s$ and  any finitely generated ideal of $R$ is $u$-$S$-finitely presented with respect to $s$. The difference between coherent rings and $u$-$S$-coherent rings is given in \cite{z24usc}.

The study of $u$-$S$-torsion theories started in 2021 (see \cite{z21}). 
An $R$-module $T$ is called a $u$-$S$-torsion $($abbreviates uniformly $S$-torsion$)$ module  provided that there exists an element $s\in S$ such that $sT=0$.  So  an $R$-module $M$ is  $S$-finite if and only if $M/F$ is $u$-$S$-torsion for some finitely generated submodule $F$ of $M$. An $R$-sequence  $M\xrightarrow{f} N\xrightarrow{g} L$ is called  \emph{$u$-$S$-exact} (at $N$) provided that there is an element $s\in S$ such that $s\Ker(g)\subseteq \Im(f)$ and $s\Im(f)\subseteq \Ker(g)$. We say a long $R$-sequence $\cdots\rightarrow A_{n-1}\xrightarrow{f_n} A_{n}\xrightarrow{f_{n+1}} A_{n+1}\rightarrow\cdots$ is $u$-$S$-exact, if for any $n$ there is an element $s\in S$ such that $s\Ker(f_{n+1})\subseteq \Im(f_n)$ and $s\Im(f_n)\subseteq \Ker(f_{n+1})$. A $u$-$S$-exact sequence $0\rightarrow A\rightarrow B\rightarrow C\rightarrow 0$ is called a short $u$-$S$-exact sequence. A $u$-$S$-short exact sequence $\xi: 0\rightarrow A\xrightarrow{f} B\xrightarrow{g} C\rightarrow 0$ is said to be $u$-$S$-split (with respect to $s$) provided that there is  $s\in S$ and $R$-homomorphism $f':B\rightarrow A$ such that $f'(f(a))=sa$ for any $a\in A$, that is, $f'\circ f=s\Id_A$.
 An $R$-homomorphism $f:M\rightarrow N$ is a \emph{$u$-$S$-monomorphism}  $($resp.,   \emph{$u$-$S$-epimorphism}, \emph{$u$-$S$-isomorphism}$)$ provided $0\rightarrow M\xrightarrow{f} N$   $($resp., $M\xrightarrow{f} N\rightarrow 0$, $0\rightarrow M\xrightarrow{f} N\rightarrow 0$ $)$ is   $u$-$S$-exact.
 It is easy to verify an  $R$-homomorphism $f:M\rightarrow N$ is a $u$-$S$-monomorphism $($resp., $u$-$S$-epimorphism, $u$-$S$-isomorphism$)$ if and only if $\Ker(f)$ $($resp., $\Coker(f)$, both $\Ker(f)$ and $\Coker(f)$$)$ is a  $u$-$S$-torsion module.
\begin{theorem}{\bf ($u$-$S$-analogue of Five Lemma)}\label{s-5-lemma}\cite[Theorem 1.2]{z23}
	Let $R$ be a ring and  $S$ a multiplicative subset of $R$. Consider the following commutative diagram with $u$-$S$-exact rows:
	$$\xymatrix@R=20pt@C=20pt{
		A\ar[d]_{f_A} \ar[r]^{g_1} & B\ar[d]_{f_B}\ar[r]^{g_2} &C\ar[d]^{f_C}\ar[r]^{g_3} &D\ar[r]^{g_4}\ar[d]^{f_D}&E\ar[d]^{f_E} \\
		A'\ar[r]^{h_1} &B'\ar[r]^{h_2}  & C'  \ar[r]^{h_3} & D' \ar[r]^{h_4} & E' .\\}$$
	\begin{enumerate}
		\item  If $f_B$ and $f_D$ are $u$-$S$-monomorphisms and $f_A$ is a $u$-$S$-epimorphism, then $f_C$ is a $u$-$S$-monomorphism.
		\item If $f_B$ and $f_D$ are $u$-$S$-epimorphisms and $f_E$ is a $u$-$S$-monomorphism, then $f_C$ is a $u$-$S$-epimorphism.
		\item If $f_A$ is a $u$-$S$-epimorphism, $f_E$ is a $u$-$S$-monomorphism, and $f_B$ and $f_D$ are $u$-$S$-isomorphisms, then $f_C$ is a $u$-$S$-isomorphism.
		\item  If $f_A$, $f_B$, $f_D$ and $f_E$ are all $u$-$S$-isomorphisms, then $f_C$ is a $u$-$S$-isomorphism.
	\end{enumerate}
\end{theorem}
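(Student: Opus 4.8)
The plan is to imitate the classical diagram chase that proves the ordinary Five Lemma, carrying along at each step the element of $S$ that measures the failure of exactness or of injectivity/surjectivity. The first move is a normalization: each $u$-$S$-exactness hypothesis on the two rows, and each hypothesis that some $f_{\bullet}$ is a $u$-$S$-monomorphism or a $u$-$S$-epimorphism, comes with its own witness in $S$, and since only finitely many such conditions occur and $S$ is multiplicatively closed I may replace all of them by a single $s\in S$. Thus I will assume $s\Ker(g_{i+1})\subseteq\Im(g_i)$ and $s\Im(g_i)\subseteq\Ker(g_{i+1})$, together with the analogous inclusions for the $h_i$, and that $s$ annihilates $\Ker(f_{\bullet})$ (respectively $\Coker(f_{\bullet})$) in each case where $f_{\bullet}$ is assumed to be a $u$-$S$-monomorphism (respectively $u$-$S$-epimorphism). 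The squares commute strictly, so no additional factor of $s$ enters from commutativity.

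For part (1) I would take $c\in\Ker(f_C)$ and chase: commutativity of the third square gives $f_D(g_3(c))=h_3(f_C(c))=0$, so $g_3(c)\in\Ker(f_D)$ and hence $sc\in\Ker(g_3)$; exactness at $C$ then yields $s^2c=g_2(b)$ for some $b\in B$. Pushing $b$ down, $h_2(f_B(b))=f_C(g_2(b))=s^2f_C(c)=0$, so exactness at $B'$ gives $sf_B(b)=h_1(a')$ for some $a'\in A'$; as $f_A$ is a $u$-$S$-epimorphism, $sa'=f_A(a)$ for some $a\in A$, and commutativity of the first square gives $f_B(g_1(a))=h_1(f_A(a))=h_1(sa')=sh_1(a')=s^2f_B(b)$, so $g_1(a)-s^2b\in\Ker(f_B)$ and therefore $sg_1(a)=s^3b$. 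Finally $sg_1(a)\in s\Im(g_1)\subseteq\Ker(g_2)$, whence $0=g_2(sg_1(a))=g_2(s^3b)=s^3g_2(b)=s^5c$; so $s^5\Ker(f_C)=0$ and $f_C$ is a $u$-$S$-monomorphism. Part (2) I would handle symmetrically, by a chase of the same shape — starting from an element of $C'$, lifting it successively through the $u$-$S$-epimorphisms $f_D$ and $f_B$ and using the $u$-$S$-monomorphism $f_E$ — to obtain a power of $s$ annihilating $\Coker(f_C)$. Then (3) is immediate, since a $u$-$S$-isomorphism is at once a $u$-$S$-monomorphism and a $u$-$S$-epimorphism, so both (1) and (2) apply to $f_C$; and (4) is the special case of (3) obtained by discarding the outer terms.

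I do not expect any genuine obstacle beyond careful bookkeeping. The points that need attention are that at each stage the element to be lifted really does exist — because the relevant map is $u$-$S$-surjective onto the submodule in question — and that the relevant composite really does vanish after a single multiplication by $s$; since at most a bounded number of factors of $s$ (here $s^{5}$) ever accumulate and $S$ is closed under products, every power that appears lies in $S$, so all conclusions are bona fide $u$-$S$-statements. The one feature that behaves differently from the classical case, and that I would be careful about, is keeping both halves of each exactness condition in play — $s\Im(g_i)\subseteq\Ker(g_{i+1})$ as well as $s\Ker(g_{i+1})\subseteq\Im(g_i)$ — since now $\Im(g_i)$ need not be contained in $\Ker(g_{i+1})$ on the nose.
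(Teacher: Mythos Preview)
Your diagram chase is correct and is exactly the standard approach: normalize to a single witness $s\in S$, then carry out the classical Five Lemma chase while accumulating powers of $s$. The computation in part~(1) checks out (the bound $s^{5}$ is right), and your remarks about (2), (3), (4) are fine, though note that (4) follows from (3) not by ``discarding the outer terms'' but simply because a $u$-$S$-isomorphism is in particular a $u$-$S$-epimorphism and a $u$-$S$-monomorphism.

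As for comparison with the paper: the paper does not prove this theorem at all. It is quoted verbatim from \cite[Theorem~1.2]{z23} and used as a black box in the proofs of Proposition~\ref{supf-usiso}, Theorem~\ref{usn-loc}, and Theorem~\ref{usc-loc}. So there is nothing in the present paper to compare your argument against; your sketch is a faithful reconstruction of what the cited reference presumably contains.
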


The notion of $u$-$S$-injective was firstly introduced and studied in \cite{QKWCZ21}.  
An $R$-module $E$ is said to be {\it $u$-$S$-injective} provided that the induced sequence $$0\rightarrow \Hom_R(C,E)\rightarrow \Hom_R(B,E)\rightarrow \Hom_R(A,E)\rightarrow 0$$ is $u$-$S$-exact for any $u$-$S$-exact sequence $0\rightarrow A\rightarrow B\rightarrow C\rightarrow 0$ of $R$-modules.

\begin{theorem}\label{s-inj-ext}\cite[Theorem 4.3]{QKWCZ21} 
	Let $R$ be a ring, $S$ a multiplicative subset of $R$, and $E$ an $R$-module. Then the following conditions are equivalent:
	\begin{enumerate}
		\item  $E$ is  $u$-$S$-injective;
		
		\item for any short exact sequence $0\rightarrow A\xrightarrow{f} B\xrightarrow{g} C\rightarrow 0$ of $R$-modules, the induced sequence $0\rightarrow \Hom_R(C,E)\xrightarrow{g^\ast} \Hom_R(B,E)\xrightarrow{f^\ast} \Hom_R(A,E)\rightarrow 0$ is  $u$-$S$-exact;
		
		\item  $\Ext_R^1(M,E)$ is  uniformly $S$-torsion for any  $R$-module $M$;
		
		\item  $\Ext_R^n(M,E)$ is  uniformly $S$-torsion for any  $R$-module $M$ and any $n\geq 1$.
	\end{enumerate}
\end{theorem}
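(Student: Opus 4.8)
The plan is to prove the cyclic chain $(1)\Rightarrow(2)\Rightarrow(3)\Rightarrow(4)\Rightarrow(1)$; the first three links are formal and essentially all the content sits in the return arrow $(4)\Rightarrow(1)$. For $(1)\Rightarrow(2)$ one only notes that an honest short exact sequence is in particular a $u$-$S$-exact sequence (with witness $s=1$ at each spot), so $(2)$ is a special case of the definition of $u$-$S$-injectivity. For $(2)\Rightarrow(3)$, given an $R$-module $M$, pick a short exact sequence $0\to K\xrightarrow{\iota}P\xrightarrow{\pi}M\to 0$ with $P$ projective; applying $\Hom_R(-,E)$ and using $\Ext^1_R(P,E)=0$ identifies $\Ext^1_R(M,E)$ with $\Coker\big(\iota^\ast\colon\Hom_R(P,E)\to\Hom_R(K,E)\big)$, and $u$-$S$-exactness of the induced $\Hom$-sequence at its rightmost term (hypothesis $(2)$) says precisely that this cokernel is annihilated by some $s\in S$. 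Finally $(3)\Rightarrow(4)$ is standard dimension shifting, reducing each $\Ext^{n}_R(M,E)$ with $n\geq 1$ to some $\Ext^1_R(-,E)$, which is uniformly $S$-torsion by $(3)$ (the torsion witness being allowed to depend on $M$ and $n$).

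The heart of the proof is $(4)\Rightarrow(1)$. Let $\xi\colon 0\to A\xrightarrow{f}B\xrightarrow{g}C\to 0$ be $u$-$S$-exact; since $S$ is multiplicative we may choose a single $s_0\in S$ witnessing $u$-$S$-exactness at all three spots, so that $s_0\Ker(f)=0$, $s_0\Im(f)\subseteq\Ker(g)$, $s_0\Ker(g)\subseteq\Im(f)$ and $s_0C\subseteq\Im(g)$. We must produce one $s\in S$ making $0\to\Hom_R(C,E)\xrightarrow{g^\ast}\Hom_R(B,E)\xrightarrow{f^\ast}\Hom_R(A,E)\to 0$ $u$-$S$-exact at all three spots. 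The key device is to replace $g$ by a genuine map out of $B/\Im(f)$: since $s_0g$ kills $\Im(f)$ it factors as $s_0g=\bar g\circ\pi$ with $\pi\colon B\twoheadrightarrow B/\Im(f)$, and a short computation from the four containments above yields $s_0^2\Ker(\bar g)=0$ and $s_0^2\Coker(\bar g)=0$, so $\bar g\colon B/\Im(f)\to C$ is a $u$-$S$-isomorphism. Factoring $\bar g$ through its image and applying $\Hom_R(-,E)$ to the two resulting honest short exact sequences, the $\Hom$-groups of the uniformly $S$-torsion modules $\Ker(\bar g)$ and $\Coker(\bar g)$ are uniformly $S$-torsion, and the single $\Ext^1$ term that occurs is uniformly $S$-torsion by $(3)$ (which we already have from $(4)$); hence $\bar g^\ast\colon\Hom_R(C,E)\to\Hom_R(B/\Im(f),E)$ is a $u$-$S$-isomorphism, so in particular $s_1\Hom_R(B/\Im(f),E)\subseteq\Im(\bar g^\ast)$ for some $s_1\in S$.

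With $\bar g$, $s_0$ and $s_1$ fixed, the three $u$-$S$-exactness conditions for the $\Hom$-sequence follow by routine diagram chases with uniform witnesses. Exactness at $\Hom_R(C,E)$ uses $s_0C\subseteq\Im(g)$, which forces $s_0\phi=0$ whenever $\phi$ kills $\Im(g)$. The containment $s_0\Im(g^\ast)\subseteq\Ker(f^\ast)$ comes from $s_0(g\circ f)=0$. For $s_1\Ker(f^\ast)\subseteq\Im(g^\ast)$, a homomorphism $\psi$ killing $\Im(f)$ factors as $\bar\psi\circ\pi$, and a preimage $\phi$ of $s_1\bar\psi$ under $\bar g^\ast$ then satisfies $s_1\psi=\phi\circ\bar g\circ\pi=\phi\circ(s_0g)=s_0g^\ast(\phi)\in\Im(g^\ast)$. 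For exactness at $\Hom_R(A,E)$, given $\alpha\colon A\to E$ the map $s_0\alpha$ kills $\Ker(f)$, hence factors through $A/\Ker(f)\cong\Im(f)$ as $\alpha'$ composed with the canonical surjection; since $\Ext^1_R(B/\Im(f),E)$ is annihilated by some $s_2\in S$, the map $s_2\alpha'$ extends along $\Im(f)\hookrightarrow B$ to some $\psi\colon B\to E$, whence $f^\ast(\psi)=s_0s_2\alpha$ and so $s_0s_2\Hom_R(A,E)\subseteq\Im(f^\ast)$. Taking $s=s_0^2s_1s_2$ then certifies $u$-$S$-exactness at every spot, so $E$ is $u$-$S$-injective.

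I expect the main obstacle to be exactly $(4)\Rightarrow(1)$, and within it the uniformity bookkeeping: one must arrange that all the auxiliary elements — $s_0$, the witness $s_1$ for $\bar g^\ast$, and the witness $s_2$ for $\Ext^1_R(B/\Im(f),E)$ — depend only on the fixed data $\xi$ and $E$, never on the particular homomorphism being lifted or extended. This single uniform choice is precisely what distinguishes the $u$-$S$ statement from its classical counterpart, where the same chase would produce a genuine injectivity conclusion with no constants to track.
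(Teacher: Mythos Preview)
The paper does not supply its own proof of this statement; it is recorded in the Preliminaries as \cite[Theorem 4.3]{QKWCZ21} and simply quoted. So there is nothing in the present paper to compare your argument against.

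That said, your proposal is a correct and self-contained proof. The forward implications $(1)\Rightarrow(2)\Rightarrow(3)\Rightarrow(4)$ are exactly as routine as you describe, and your handling of $(4)\Rightarrow(1)$ via the auxiliary map $\bar g\colon B/\Im(f)\to C$ induced by $s_0g$ is a clean device: the verification that $s_0^2$ kills both $\Ker(\bar g)$ and $\Coker(\bar g)$ is accurate, and from there the two honest short exact sequences plus one use of $(3)$ do give that $\bar g^\ast$ is a $u$-$S$-isomorphism. The three chases for $u$-$S$-exactness of the $\Hom$-sequence are all correct, including the one subtle point that $s_1\psi=s_0g^\ast(\phi)=g^\ast(s_0\phi)\in\Im(g^\ast)$. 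Your final paragraph correctly identifies and resolves the only real issue, namely that the witnesses $s_0,s_1,s_2$ depend only on the fixed $u$-$S$-exact sequence $\xi$ and on $E$, never on the particular $\phi$, $\psi$, or $\alpha$ being manipulated; this is precisely what the definition of $u$-$S$-injectivity demands, and your product $s=s_0^2s_1s_2$ suffices at every spot.
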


An $R$-module $M$ is said to be $S$-divisible if $sM=M$ for any $s\in S$. The authors in \cite{QKWCZ21} investigated Baer's Criterion for $S$-divisible modules.

\begin{proposition}\label{s-inj-baer-di}{\bf (Baer's Criterion for $S$-divisible modules)}\cite[Proposition 4.9]{QKWCZ21}	Let $R$ be a ring, $S$ a multiplicative subset of $R$ and $E$ an $S$-divisible  $R$-module. If  there exists $s\in S$ such that  $s\Ext_R^1(R/I,E)=0$ for any ideal $I$ of $R$, then $E$ is $u$-$S$-injective.
\end{proposition}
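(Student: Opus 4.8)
By Theorem~\ref{s-inj-ext} it suffices to prove that $\Ext_R^1(M,E)$ is uniformly $S$-torsion for every $R$-module $M$, where the annihilating element of $S$ may depend on $M$. Fixing a short exact sequence $0\to K\to F\to M\to 0$ with $F$ free, we have $\Ext_R^1(M,E)\cong\coker\!\big(\Hom_R(F,E)\to\Hom_R(K,E)\big)$, so the problem becomes: given a submodule $K$ of a free module $F$, find $t\in S$ such that $tg$ extends to $F$ for every $g\in\Hom_R(K,E)$. The plan is to settle this by a Baer-type extension argument on the pair $K\subseteq F$.

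For $F=R^n$ of finite rank one argues by induction on $n$, obtaining $t=s^n$: the case $n=1$ is exactly the hypothesis $s\Ext_R^1(R/I,E)=0$; for the step, split off a free summand of rank $1$, extend $s^{n-1}(g|_{K\cap R^{n-1}})$ by the inductive hypothesis, observe that subtracting this extension makes the remainder of $s^{n-1}g$ factor through an ideal of $R$ (the image of $K$ in the rank-$1$ quotient), extend $s$ times that map by the hypothesis, and recombine — a routine bookkeeping that does not use $S$-divisibility. Hence $\Ext_R^1(M,E)$ is already uniformly $S$-torsion for every finitely generated $M$.

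The essential case is $F$ of infinite rank (equivalently, arbitrary $M$), and this is where $S$-divisibility is indispensable: the naive transfinite version of the finite-rank argument would accumulate an unbounded power of $s$. The point is that when one well-orders a basis of $F$ and extends over one basis vector at a time, the map to be extended at each successor stage is only controlled after multiplication by $s$; the $S$-divisibility of $E$ is then used to re-normalize the extension at every stage — dividing the newly introduced element of $E$ by $s$ — so that the error terms, which lie in $E[s]$, do not compound and only a bounded power of $s$ is lost overall. Keeping this compatible with the limit stages is the heart of the proof and the step I expect to be the main obstacle. An equivalent way to package it: since $sE=E$, the localization map $E\to S_0^{-1}E$ (with $S_0=\{s^n:n\ge0\}$) is surjective; its target is injective over $S_0^{-1}R$, hence over $R$ (here $s\Ext_R^1(R/I,E)=0$ together with the invertibility of $s$ on the localization upgrades ``killed by $s$'' to ``zero''), and one is left to control $\Ext_R^1(M,-)$ into the $s$-power-torsion kernel $N$; the same delicacy resurfaces as the need to handle non-finitely generated ideals, where $N$ being simultaneously $S$-torsion and $S$-divisible is what must be exploited.
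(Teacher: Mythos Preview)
This proposition is quoted from \cite{QKWCZ21} without proof, so there is no argument in the present paper to compare against directly. The nearest thing is the paper's own Proposition~\ref{s-inj-baer-sf} (the $S$-torsion-free variant), and its proof follows a completely different strategy from yours: one runs Zorn's lemma directly on partial extensions $(C,d)$ with $A\subseteq C\subseteq B$ and $d|_A=sf$, invokes the Baer hypothesis plus the extra hypothesis on $E$ at the single step of enlarging $C$ by one element, and obtains the fixed bound $s^2$ uniformly over all pairs $A\subseteq B$. The proof in \cite{QKWCZ21} for the $S$-divisible case has the same shape.

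Your finite-rank induction is correct and, as you say, does not need $S$-divisibility; but it yields $t=s^n$ with $n$ the rank, so it says nothing for infinite rank, and both of your approaches to the infinite case have a real gap at the same spot. In the transfinite branch, $S$-divisibility lets you write the Baer-extended value as $g(1)=se_1$, but the well-definedness check then gives only $re_1-d(rb)\in E[s]$, not $=0$; the new partial extension is therefore well-defined only modulo $E[s]$, and at the next stage the auxiliary map $h$ built from it is a homomorphism only after a further multiplication by $s$ --- reinstating the factor you just divided out. You flag this as ``the main obstacle'', and the sketch does not overcome it. In the localization branch, the claim that $E_s$ is injective over $R$ is unjustified: the long exact sequence of $0\to N\to E\to E_s\to 0$ gives an injection $\Ext_R^1(R/I,E_s)\hookrightarrow\Ext_R^2(R/I,N)$ (the map from $\Ext_R^1(R/I,E)$ has $s$-torsion image landing in a module on which $s$ is invertible, hence is zero), and there is no control over $\Ext_R^2(R/I,N)$ for an arbitrary ideal $I$. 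Your parenthetical implicitly identifies $\Ext_R^1(R/I,E_s)$ with $\big(\Ext_R^1(R/I,E)\big)_s$, which requires finiteness hypotheses on $I$ that are absent here. And as you concede, even if $E_s$ were injective one is thrown back onto the kernel $N$, where the same difficulty recurs.

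The Zorn's-lemma organization, as in Proposition~\ref{s-inj-baer-sf}, avoids the accumulation problem entirely, because the extra hypothesis on $E$ is invoked once per maximal extension rather than once per basis vector; that is the route to take.
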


\section{The localization of $u$-$S$-injective modules over $u$-$S$-Noetherian rings}

Recently, Baeck  \cite{B24} introduced the notion of $S$-injective modules from the experiences of $S$-finite modules. An $R$-module $M$ is said to be $S$-injective if there exists an injective $R$-submodule $Q$ of $M$ such that $Ms\subseteq Q\subseteq M$. It follows from \cite[Theorem 2.3]{B24}
that an $R$-module $M$ is  $S$-injective if and only if $M\cong L\oplus Q$ for some injective $R$-module $Q$ and $u$-$S$-torsion  $R$-module $L$. Since $u$-$S$-injective modules are closed under $u$-$S$-isomorphisms by \cite[Proposition 4.7]{QKWCZ21}, every $S$-injective module is $u$-$S$-injective. The following example shows that every $u$-$S$-injective module is not $S$-injective in general.

\begin{example} Let $D=k[x,y]$ be the polynomial rings with $k$ a field. Let $N=D/xD$. Then $id_D(N)=id_{N}(N)+1=2>1.$
Set $S=\{x^n\mid n\in \mathbb{N}\}$. Let $E(N)$ be the injective envelope of $N$. We claim that $E(N)/N$ is a  $u$-$S$-injective $D$-module but not $S$-injective.

Indeed, since  $E(N)/N$ is $u$-$S$-isomorphic to the injective module $E(N)$,  $E(N)/N$ is certainly $u$-$S$-injective. Now, we claim that $E(N)/N$ is  not $S$-injective. On contrary, assume that   $E(N)/N$ is  $S$-injective. Then $E(N)/N= Q\oplus T$ for some injective module $Q$ and $u$-$S$-torsion module $T$ by  \cite[Theorem 2.3]{B24}. Assume  $x^nT=0$ for some $n\in\mathbb{N}$. Then $$E(N)/N=  x^n(E(N)/N)= x^nQ\oplus x^nT\cong x^nQ=Q,$$ as every injective module and thus its quotient module over an integral domain are divisible.  Hence $E(N)/N$ is injective, contradicting that  $id_D(N)>1$. 	
\end{example}

Let $R$ be a Noetherian ring, and $S,T$ be multiplicative subsets of $R$ such that $0\not\in sT$ for each $s\in S$. Suppose $M$ is an $S$-injective $R$-module. The author in \cite{B24} asserted $M_T$ is also an $S_T$-injective $R_T$-module. Then he asked the following Question:

\textbf{Question1:}\cite[Question 3.11]{B24}  Let $R$ be an $S$-Noetherian ring and $T$ be multiplicative subsets of $R$ such that $0\not\in sT$ for each $s\in S$. Suppose $M$ is an $S$-injective $R$-module. Is $M_T$  an $S_T$-injective $R_T$-module?

Now, we give a negative answer to this question. 

\begin{example}\label{ex-311}
Let $D=k[y,x_\alpha\mid \alpha\in \mathbb{N}\bigsqcup \mathbb{N}^{\mathbb{N}}]$ be a multivariate polynomial domain over  a field $k$. Let  $S=D-\{0\}$ and $T=\{y^n\mid n\in \mathbb{N}\}$. Then there is an injective $D$-module $E$ such that $E_T$ is not injective as $D_T$-module by \cite[Theorem 25]{D81}. Note that $D$ is obviously an $S$-Noetherian ring. We claim that $E_T$ is also not $S_T$-injective as $D_T$-module. Indeed, assume that $E_T$ is an $S_T$-injective $D_T$-module. Then $$E_T\cong Q\oplus L$$  for some injective $D_T$-module $Q$ and $D_T$-module $L$ satisfying $L\frac{s}{t}=0$ for some $\frac{s}{t}\in S_T$.  Since every injective module over an integral domain is divisible, we have the following equivalences: $$E_T\cong (sE)_T \cong \frac{s}{t}E_T\cong \frac{s}{t}Q\oplus \frac{s}{t}L\cong  \frac{s}{t}Q \cong Q.$$ 
Consequently, $E_T$ is an injective $D_T$-module which is a contradiction.
\end{example}

In rest of this section, we mainly investigate the localization of $u$-$S$-injective modules over $u$-$S$-Noetherian ring.

\begin{lemma}\label{S-Noe-usfp} Let $R$ be a $S$-Noetherian ring. Then every ideal of $R$  is $u$-$S$-finitely presented.
\end{lemma}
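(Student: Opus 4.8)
The plan is to produce, for an arbitrary ideal $I$ of $R$, an explicit $u$-$S$-finitely presented presentation, built from the $S$-finiteness of $I$ together with a control of the associated relation module. First I would invoke that $R$ is $S$-Noetherian to choose $s\in S$ and a finitely generated subideal $K=(a_1,\dots,a_n)\subseteq I$ with $sI\subseteq K$. Let $\pi\colon R^n\twoheadrightarrow K$ be the surjection $e_i\mapsto a_i$, and let $f\colon R^n\to I$ be its composite with the inclusion $K\hookrightarrow I$; then $\Im(f)=K$, so $\Coker(f)=I/K$ is killed by $s$, while $\Ker(f)=\Ker(\pi)=:N\subseteq R^n$ is the module of relations. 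Thus $R^n$ already plays the role of the ``$F$'' in the definition of $u$-$S$-finite presentation as far as the cokernel is concerned, the only defect being that $N$ need not be finitely generated; the task is therefore to replace $R^n$ by a finitely presented module without spoiling the cokernel.

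The key step is to show that $N\subseteq R^n$ is $S$-finite, i.e.\ that $tN\subseteq N_0$ for some $t\in S$ and some finitely generated submodule $N_0\subseteq N$. I would prove this by induction on $n$: for $n=1$ a submodule of $R$ is an ideal, hence $S$-finite because $R$ is $S$-Noetherian; for the inductive step one applies the case $n=1$ to the image of $N$ under the projection $R^n\to R$ onto the last coordinate and the inductive hypothesis to $N\cap R^{n-1}$, and then checks that the product in $S$ of the two witnessing elements witnesses the $S$-finiteness of $N$. This is just the module-level content of $S$-Noetherianity (cf.\ \cite{ad02}); it is the main technical point of the proof, but entirely routine.

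Finally I would set $F:=R^n/N_0$, which is finitely presented since $R^n$ is finitely generated free and $N_0$ is finitely generated. The homomorphism $f$ factors as $R^n\to F\xrightarrow{\bar f} I$, and a direct check gives $\Im(\bar f)=K$ and $\Ker(\bar f)=N/N_0$, so $\Coker(\bar f)=I/K$ is killed by $s$ and $\Ker(\bar f)=N/N_0$ is killed by $t$; hence both are killed by $st\in S$. Therefore the exact sequence
$$0\rightarrow N/N_0\rightarrow F\xrightarrow{\bar f} I\rightarrow I/K\rightarrow 0$$
exhibits $I$ as $u$-$S$-finitely presented with respect to $st$, which is exactly what the lemma asserts. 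I do not expect any obstacle beyond organizing the relation-module estimate of the second step cleanly.
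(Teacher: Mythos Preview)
Your proposal is correct and follows essentially the same approach as the paper: pick an $S$-finite approximation $K\subseteq I$, present $K$ by a finitely generated free module, use $S$-finiteness of the relation module to pass to a finitely presented quotient, and splice the two short exact sequences. The only difference is cosmetic: the paper simply asserts that the relation module is $S$-finite (implicitly invoking the standard fact for $S$-Noetherian rings), whereas you sketch the induction on $n$ explicitly.
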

\begin{proof} Let $I$ be an ideal of $R$. Then there is $s_1\in S$ and a finitely generated ideal $J$ of $R$ such that $s_1I\subseteq J\subseteq I$. Set $T_1=I/J$. Then $s_1T_1=0$. Consider the exact sequence $0\rightarrow K\rightarrow P\rightarrow J\rightarrow 0$, where $P$ is a finitely generated free $R$-module. Then $K$ is $S$-finite. And so there is $s_2\in S$ and a finitely generated $R$-module $L$ such that $s_2K\subseteq L\subseteq K$. Set $T_2=K/L$. Then $s_2T_2=0$. Combining the exact sequences $0\rightarrow T_2\rightarrow P/L\rightarrow J\rightarrow 0$ and $0\rightarrow J\rightarrow I\rightarrow T_1\rightarrow 0$, we have the following exact sequence $$0\rightarrow T_2\rightarrow P/L\rightarrow I\rightarrow T_1\rightarrow 0,$$ where $s_1s_2T_1=s_1s_2T_2=0$ and $P/L$ is finitely presented. So $I$ is $u$-$S$-finitely presented.
\end{proof}

Let $R$ be a ring, $T$ a multiplicative subset of $R$,  and $M,N$ $R$-modules. Then there is a natural homomorphism:
$$\phi_M:\Hom_R(M,N)_T\rightarrow \Hom_{R_T}(M_T,N_T)$$
with $\phi_M(\frac{g}{1})=\widetilde{g}:\frac{i}{1}\mapsto \frac{g(i)}{1}.$

\begin{proposition}\label{supf-usiso} Let $R$ be a ring , $T$ a multiplicative subset of $R$,  and $M,N$ $R$-modules. If $M$ a $u$-$S$-finitely presented $R$-module, then $\phi_M$ is a $u$-$S$-isomorphism.
\end{proposition}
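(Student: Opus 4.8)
The plan is to reduce to the classical fact that localization commutes with $\Hom$ out of a finitely presented module, then transport this across the two $u$-$S$-torsion "defects" in the definition of $u$-$S$-finite presentation. Recall that $M$ being $u$-$S$-finitely presented with respect to some $s\in S$ means there is an exact sequence $0\to T_1\to F\xrightarrow{f} M\to T_2\to 0$ with $F$ finitely presented and $sT_1=sT_2=0$. Splitting this into the short exact sequences $0\to T_1\to F\to \im f\to 0$ and $0\to \im f\to M\to T_2\to 0$, I would first observe that the inclusion $\im f\hookrightarrow M$ and the surjection $F\twoheadrightarrow \im f$ are both $u$-$S$-isomorphisms (their kernels/cokernels are killed by $s$), so $F$ and $M$ are $u$-$S$-isomorphic via a zig-zag through $\im f$.

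The key step is then a naturality/functoriality argument for $\phi$. For a fixed second argument $N$ and multiplicative set $T$, the assignment $M\mapsto \Hom_R(M,N)_T$ and $M\mapsto \Hom_{R_T}(M_T,N_T)$ are both contravariant functors, and $\phi_{(-)}$ is a natural transformation between them. For the finitely presented module $F$, the map $\phi_F$ is an isomorphism: this is the standard fact that $\Hom_R(F,N)_T\cong\Hom_{R_T}(F_T,N_T)$ when $F$ is finitely presented (proved by taking a finite free presentation $R^m\to R^n\to F\to 0$, applying $\Hom_R(-,N)$ and then $(-)_T$, comparing with $\Hom_{R_T}(-,N_T)$ applied to the localized presentation $R_T^m\to R_T^n\to F_T\to 0$, and using the five lemma together with $\phi_{R^n}$ being an isomorphism since $\Hom_R(R^n,N)_T\cong N_T^n\cong\Hom_{R_T}(R_T^n,N_T)$). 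So the problem becomes: given that $\phi_F$ is an isomorphism and that there is a $u$-$S$-isomorphism relating $F$ and $M$, conclude that $\phi_M$ is a $u$-$S$-isomorphism.

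To make that last implication precise I would argue through $\im f =: M'$. Apply $\Hom_R(-,N)$ to $0\to T_1\to F\to M'\to 0$: since $T_1$ is $u$-$S$-torsion, so are $\Hom_R(T_1,N)$ and $\Ext^1_R(T_1,N)$ (both killed by $s$), hence $\Hom_R(M',N)\to\Hom_R(F,N)$ is a $u$-$S$-isomorphism; localizing at $T$ preserves this (localization is exact, so it sends $u$-$S$-torsion modules to $u$-$S$-torsion, here with respect to $s/1\in S_T$). Likewise, $F_T\to M'_T$ is a $u$-$S$-isomorphism of $R_T$-modules (kernel/cokernel are localizations of modules killed by $s$), so $\Hom_{R_T}(M'_T,N_T)\to\Hom_{R_T}(F_T,N_T)$ is a $u$-$S$-isomorphism. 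Running the same argument with $0\to M'\to M\to T_2\to 0$ relates $\Hom_R(M,N)_T$ to $\Hom_R(M',N)_T$ and $\Hom_{R_T}(M_T,N_T)$ to $\Hom_{R_T}(M'_T,N_T)$ by $u$-$S$-isomorphisms. Assembling the commutative ladder
$$
\xymatrix@C=16pt{
\Hom_R(M,N)_T \ar[r]\ar[d]_{\phi_M} & \Hom_R(M',N)_T \ar[d]^{\phi_{M'}} & \Hom_R(F,N)_T \ar[l]\ar[d]^{\phi_F} \\
\Hom_{R_T}(M_T,N_T)\ar[r] & \Hom_{R_T}(M'_T,N_T) & \Hom_{R_T}(F_T,N_T)\ar[l]
}
$$
whose horizontal maps are all $u$-$S$-isomorphisms and whose rightmost vertical map $\phi_F$ is an isomorphism, a straightforward diagram chase (or an appeal to the composition/cancellation properties of $u$-$S$-isomorphisms, equivalently the $u$-$S$-five lemma, Theorem \ref{s-5-lemma}) shows first that $\phi_{M'}$ is a $u$-$S$-isomorphism and then that $\phi_M$ is.

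The main obstacle is bookkeeping the multiplicative set: the torsion elements $s$ appearing in the defining sequence of $M$ live in $S$, while the $u$-$S$-isomorphism statements about the localized modules must be phrased over $R_T$ with respect to $S_T$. One must check that a module killed by $s\in S$ localizes to a module killed by $s/1\in S_T$, and that being $u$-$S$-torsion over $R$ is inherited by $(-)_T$; both are immediate but need to be said. A second minor point is verifying the naturality square for $\phi$ with respect to the maps $F\to M'$ and $M'\to M$, which is a direct unravelling of the formula $\phi_M(g/1)=\widetilde g$ and commutes on the nose — no torsion is lost there.
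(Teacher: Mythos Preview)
Your proposal is correct and follows essentially the same route as the paper: both factor the defining four-term sequence through $M'=\Im(f)$, use that $\phi_F$ is an honest isomorphism for the finitely presented $F$, note that $\Hom_R(T_i,N)$ and $\Ext^1_R(T_i,N)$ are $u$-$S$-torsion (the paper cites \cite[Lemma 4.2]{QKWCZ21} for this), and then apply the $u$-$S$-five lemma to pass first to $\phi_{M'}$ and then to $\phi_M$. The only cosmetic difference is that the paper writes out the long-exact-sequence ladders with the $\Ext^1$ terms explicitly, whereas you phrase it via the horizontal maps being $u$-$S$-isomorphisms; also note that the paper's conclusion is stated as a $u$-$S$-isomorphism (over $R$ with respect to $S$), so your $S$-versus-$S_T$ bookkeeping, while harmless, is not actually needed for the statement as written.
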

\begin{proof} Since  $M$ is  a $u$-$S$-finitely presented $R$-module, then  there is an exact sequence $$0\rightarrow T_1\rightarrow F\xrightarrow{f} M\rightarrow T_2\rightarrow 0$$ with $F$ finitely presented and $sT_1=sT_2=0$. Consider the following commutative diagrams with exact rows : 
$$\xymatrix@R=20pt@C=10pt{
0 \ar[r]^{} & \Hom_R(T_2,N)_T\ar[d]_{\phi_{T_2}}\ar[r]^{} &\Hom_R(M,N)_T\ar[d]^{\phi_M}\ar[r]^{} &\Hom_R(\Im(f),N)_T\ar[r]^{}\ar[d]^{\phi_{\Im(f)}}&\Ext^1_R(T_2,N)_T\ar[d]^{\phi^1_{T_2}} \\
0\ar[r]^{} &\Hom_{R_T}((T_2)_T,N_T)\ar[r]^{}  &\Hom_{R_T}(M_T,N_T) \ar[r]^{} & \Hom_{R_T}(\Im(f)_T,N_T) \ar[r]^{} & \Ext^1_{R_T}((T_2)_T,N_T),\\}$$
and
$$\xymatrix@R=20pt@C=15pt{
0 \ar[r]^{} & \Hom_R(\Im(f),N)_T\ar[d]_{\phi_{\Im(f)}}\ar[r]^{}&\Hom_R(F,N)_T\ar[r]^{}\ar[d]^{\phi_F} &\Hom_R(T_1,N)_T\ar[d]^{\phi_{T_1}} \\
0\ar[r]^{} &\Hom_{R_T}(\Im(f)_T,N_T)\ar[r]^{}  &\Hom_{R_T}(F_T,N_T) \ar[r]^{} & \Hom_{R_T}((T_1)_T,N_T).\\}$$
By \cite[Lemma 4.2]{QKWCZ21}, $\Ext^n_{R}(U,M)$ are all 
$u$-$S$-torsion modules for all $R$-module $M$, $u$-$S$-torsion module $U$ and all $n\geq 0$. Since  $F$ is  finitely presented, $\phi_F$ is an isomorphism. It follows by \cite[Theorem 1.2]{z23} that $\phi_{\Im(f)}$, and hence $\phi_M$, are $u$-$S$-isomorphisms.
\end{proof}

Recall that an $R$-module $M$ is said to be $S$-torsion-free provided that $sm=0$ with $s\in S$ and $m\in M$ implies that $m=0$.  The authors in \cite{QKWCZ21} investigated Baer's Criterion for $S$-divisible modules. Now, we investigate the Baer's Criterion for $S$-torsion-free modules.

\begin{proposition}\label{s-inj-baer-sf}{\bf (Baer's Criterion for $S$-torsion-free modules)}	Let $R$ be a ring, $S$ a regular multiplicative subset of $R$ and $E$ an $S$-torsion-free  $R$-module. If  there exists $s\in S$ such that  $s\Ext_R^1(R/I,E)=0$ for any ideal $I$ of $R$, then $E$ is $u$-$S$-injective.
\end{proposition}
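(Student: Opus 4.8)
The plan is to invoke Theorem~\ref{s-inj-ext} and reduce the statement to showing that $\Ext^1_R(M,E)$ is uniformly $S$-torsion for every $R$-module $M$. To set things up, fix an exact sequence $0\to E\xrightarrow{\iota}\bar E\xrightarrow{\pi}C\to 0$ with $\bar E=E(E)$ the injective envelope of $E$. Since $S$ is regular, applying $\Hom_R(-,\bar E)$ to $0\to R\xrightarrow{s}R$ shows $s\bar E=\bar E$, so $\bar E$ is $S$-divisible; and since $E$ is $S$-torsion-free and essential in $\bar E$, $\bar E$ is $S$-torsion-free as well. Hence every $s\in S$ acts bijectively on $\bar E$, and in particular $C=\bar E/E$ is $S$-divisible. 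Applying $\Hom_R(R/I,-)$ to the sequence and using $\Ext^1_R(R/I,\bar E)=0$, the hypothesis ``$s\Ext^1_R(R/I,E)=0$ for all $I$'' reads as ``$s\,C[I]\subseteq\pi\big(\bar E[I]\big)$ for all ideals $I$'', where $X[I]:=\Hom_R(R/I,X)$.

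I would first dispose of the finitely generated case by d\'evissage: a module generated by $n$ elements has a finite filtration $0=M_0\subseteq\cdots\subseteq M_n=M$ with each $M_i/M_{i-1}$ cyclic, and chasing the long exact sequences of $0\to M_{i-1}\to M_i\to M_i/M_{i-1}\to 0$ together with $s\Ext^1_R(R/J,E)=0$ gives $s^{\,n}\Ext^1_R(M,E)=0$; the same computation gives $s^{\,n}\Ext^1_R(N,E)=0$ for every ideal $N$ generated by $n$ elements. For an arbitrary module $M$, take a short exact sequence $0\to A\to B\to M\to 0$ and a map $h\colon A\to E$; since $\bar E$ is injective, extend $h$ to $\bar h\colon B\to\bar E$, so $\pi\bar h$ kills $A$ and induces $\psi\colon M\to C$. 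If one can produce $s'\in S$ and a lift $\tilde\psi\colon M\to\bar E$ with $\pi\tilde\psi=s'\psi$, then $s'\bar h-\tilde\psi\,p$, where $p\colon B\to M$ is the quotient map, takes values in $E$ and restricts to $s'h$ on $A$, so $s'h$ extends to $B$; by Theorem~\ref{s-inj-ext}(2) this is exactly what is required. The lift $\tilde\psi$ is built by a Zorn's lemma argument, where at a successor step the obstruction to extending a partial lift over one more generator is an element $c\in C[I]$: using the reformulated hypothesis one writes $sc=\pi(e)$ with $e\in\bar E[I]$, then $e=se'$ with $e'\in\bar E[I]$ (legitimate because $\bar E$, being $S$-torsion-free and $S$-divisible, has $s$ acting bijectively), and one corrects the partial lift by $e'$.

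The step I expect to be the real obstacle is precisely this recursion. Extending one generator at a time in a naive way costs a factor of $s$ at every successor stage, and these powers accumulate, so one cannot simply pass through limit ordinals. The point of working inside $\bar E$ rather than $E$ is that $s$ is an automorphism of $\bar E$ — and here both hypotheses, $S$ regular and $E$ $S$-torsion-free, are genuinely used — so that after each correction one can divide by $s$ and renormalise, keeping the power of $s$ uniformly bounded along the whole recursion; in parallel one must check that the renormalised maps still reduce correctly modulo $E$, which again uses the $S$-divisibility of $C$ and the reformulated hypothesis. Everything outside this recursion is routine diagram chasing with the long exact sequences, or with the $u$-$S$-analogue of the five lemma (Theorem~\ref{s-5-lemma}). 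It would be tempting to instead deduce the statement from Proposition~\ref{s-inj-baer-di} applied to the $S$-divisible module $C$, but that appears to require uniform control of $\Ext^2_R(R/I,E)$, which the hypothesis does not obviously give for non-finitely-generated $I$; so the direct recursion seems the more reliable route.
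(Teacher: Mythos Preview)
Your route through the injective envelope $\bar E=E(E)$ is genuinely different from the paper's, and the setup is sound: $s$ does act bijectively on $\bar E$, the hypothesis does translate to $sC[I]\subseteq\pi(\bar E[I])$, and the obstruction to extending a strict partial lift $\tilde\psi_0$ (with $\pi\tilde\psi_0=\psi|_{M_0}$) by one generator $m$ is indeed an element $c\in C[I]$ where $I=(M_0:m)$. The gap is exactly at the renormalisation you flag. After writing $sc=\pi(e)$ with $e\in\bar E[I]$ and $e=se'$ with $e'\in\bar E[I]$, correcting by $e'$ gives an extension $e_1$ with $re_1=\tilde\psi_0(rm)$ for $r\in I$ (good), but $\pi(e_1)-\psi(m)=c-\pi(e')$ is only killed by $s$, not zero: you have $s\pi(e')=\pi(e)=sc$, and $C$ is not $S$-torsion-free. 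So the strict lift condition $\pi\tilde\psi_0=\psi|_{M_0}$ (or $=s'\psi|_{M_0}$) is \emph{not} preserved, and your Zorn set as written is not closed under the successor step.

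The fix is to run Zorn on the relaxed condition $s\bigl(\pi\tilde\psi_0-\psi|_{M_0}\bigr)=0$. Then the obstruction $c=\pi(e_0)-\psi(m)$ lies only in $C[sI]$; you apply the hypothesis to the ideal $sI$ and use $\bar E[sI]=\bar E[I]$ (by $S$-torsion-freeness of $\bar E$) to get $e,e'$ as before, and now the same computation shows the relaxed condition \emph{is} preserved. At the end $s\tilde\psi$ is an honest lift of $s\psi$, and unwinding gives $s\,\Ext^1_R(M,E)=0$ for every $M$.

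The paper avoids this detour entirely. It works directly in $E$ with the classical Baer--Zorn argument on extensions $(C,d)$ of $sf$, and the single trick is this: at the successor step with $I=(C:b)$, instead of looking at $I\to E$, define $h\colon sI\to E$ by $h(sr)=d(rb)$ (well-defined since $s$ is regular). The hypothesis applied to the ideal $sI$ gives $g\colon R\to E$ with $g(sr)=sh(sr)=sd(rb)$; then $sg(r)=sd(rb)$ and $S$-torsion-freeness of $E$ yields $g(r)=d(rb)$ exactly, so $d$ extends to $C+Rb$ with no loss of an $s$-factor. Thus the recursion is clean, and one gets $d'\colon B\to E$, $d'(b)=d(sb)$, extending $s^2f$. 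Your approach, once repaired, yields the slightly sharper constant $s$, but at the cost of a substantially longer argument; the paper's device of passing to $sI$ and cancelling via torsion-freeness is the efficient substitute for your division-in-$\bar E$ idea.
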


\begin{proof}
Let $B$ be an $R$-module, $A$ a submodule of $B$, and $s$  a nonzero-divisor in  $S$ satisfying the necessity. Let $f:A\rightarrow E$ be an $R$-homomorphism. Set
\begin{center}
$\Gamma:=\{(C,d) \mid C$ is a submodule of $B$ containing $A$ and $d|_A=sf\}.$
\end{center}
Since $(A,sf)\in \Gamma$, $\Gamma$ is nonempty. Define $(C_1,d_1)< (C_2,d_2)$ if $C_1\subsetneq sC_2$ and $d_2|_{C_1}=d_1$. Then $(\Gamma, <)$ is a partially ordered set. For any chain $(C_j,d_j)$, set $C_0:=\bigcup\limits_{j}C_j$ and define $d_0(c)=d_j(c)$ if $c\in C_j$. Then $(C_0,d_0)$ is the upper bound of the chain $(C_j,d_j)$. By Zorn's Lemma,  there is a maximal element $(C,d)$  in $\Gamma$.
	
We claim that $sB\subseteq C$. Assume on the contrary that $sB\not\subseteq C$ and let $x=sb\in sB \setminus C$ with $b\in B$. Denote $I:=\{r\in R \mid rb\in C\}$. Then $I$ is an ideal of $R$. Let $h:sI\rightarrow E$ satisfying that $h(sr)=d(rb)$. Note that $h$ is well-defined since $s$ is a nonzero-divisor.  Then there is an $R$-homomorphism $g: R\rightarrow E$ such that $g(sr)=sh(sr)=sd(rb)$ for any $sr\in sI$. Since $E$ is $S$-torsion-free, we have $g(r)=h(sr)=d(rb)$ for any $r\in I$. Set $C_1:=C+Rb$ and define $d_1(c+rb)=d(c)+g(r)$, where $c\in C$ and $r\in R$. If  $c+rb=0$, then $r\in I$, and thus $d(c)+g(r)=d(c)+h(sr)=d(c)+d(rb)=d(c+rb)=0$. Hence $d_1$ is a well-defined homomorphism such that $d_1|_A=sf$.  So $(C_1,d_1)\in  \Gamma$. However $sC_1\subsetneq C$, so $(C_1,d_1)> (C,d)$, which contradicts the maximality of $(C,d)$.

Now,we define $d':B\rightarrow E$ to be $d'(b)=d(sb)$. Then $d'(a)=d(sa)=sf(sa)=s^2f(a)$. Hence $E$ is $u$-$S$-injective module with respect to $s^2.$
\end{proof}

\begin{theorem}\label{usn-loc} Let $R$ be a  $u$-$S$-Noetherian ring with $S$  a regular multiplicative subset of $R$ and $T$ a multiplicative subset of $R$ with $0\not\in sT$ for each $s\in S$. Suppose an $R$-module $M$ is $S$-torsion-free or $S$-divisible. If  $M$ is  $u$-$S$-injective, then $M_T$ is a $u$-$S_T$-injective $R_T$-module.
\end{theorem}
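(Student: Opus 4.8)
The plan is to reduce the statement to a Baer-type criterion over $R_T$ via the two propositions established just above, handling the two cases ($S$-torsion-free, $S$-divisible) in parallel. First I would fix, using $u$-$S$-Noetherianity, a single $s_0\in S$ witnessing that every ideal of $R$ is $S$-finite with respect to $s_0$; by Lemma~\ref{S-Noe-usfp} (whose proof clearly uniformizes when $R$ is $u$-$S$-Noetherian) every ideal of $R$ is in fact $u$-$S$-finitely presented \emph{with respect to a fixed} $s\in S$ (some fixed power of $s_0$). Since $M$ is $u$-$S$-injective, Theorem~\ref{s-inj-ext} gives $s'\in S$ with $s'\Ext_R^1(R/I,M)=0$ for all $I$; absorbing $s'$ into $s$ we may assume one $s$ works everywhere.

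Next I would compute $\Ext_{R_T}^1(R_T/\mathfrak a, M_T)$ for an arbitrary ideal $\mathfrak a$ of $R_T$. Writing $\mathfrak a = I_T$ for an ideal $I$ of $R$, and noting $R_T/I_T \cong (R/I)_T$, I want to relate this to $\Ext_R^1(R/I,M)_T$. The module $R/I$ is $u$-$S$-finitely presented with respect to $s$, so Proposition~\ref{supf-usiso} applies to $\Hom$; to pass to $\Ext^1$ I would take a finitely presented approximation $0\to T_2'\to P\to R/I\to T_1'\to 0$ of $R/I$ (as in the proof of Lemma~\ref{S-Noe-usfp}, with $sT_1'=sT_2'=0$), break it into short exact sequences, apply $\Hom_R(-,M)_T$ and $\Hom_{R_T}(-_T, M_T)$, and use the $u$-$S$-Five Lemma (Theorem~\ref{s-5-lemma}) together with the fact that $\Ext_R^n(U,M)$ and $\Ext_{R_T}^n(U_T,M_T)$ are $u$-$S$-torsion for $U$ $u$-$S$-torsion (\cite[Lemma 4.2]{QKWCZ21}). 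This yields that the natural map $\Ext_R^1(R/I,M)_T \to \Ext_{R_T}^1(R_T/I_T, M_T)$ is a $u$-$S_T$-isomorphism, hence in particular $\Ext_{R_T}^1(R_T/\mathfrak a, M_T)$ is annihilated by $\tfrac{s}{1}\in S_T$ up to a further fixed factor. Combined with localization being exact (so $\tfrac{s}{1}$ kills $\Ext_{R_T}^1(R_T/\mathfrak a, M_T)$ since $\frac{s}{1}\cdot\Ext_R^1(R/I,M)_T = (s\Ext_R^1(R/I,M))_T = 0$, modulo the $u$-$S$-iso correction), we get: there exists $\sigma\in S_T$ with $\sigma\,\Ext_{R_T}^1(R_T/\mathfrak a, M_T)=0$ for all ideals $\mathfrak a\trianglelefteq R_T$.

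Finally I would invoke the Baer criteria over $R_T$ with the multiplicative set $S_T$, which makes sense precisely because $0\notin sT$ forces $S_T$ to avoid $0$ and, when $S$ is regular in $R$, one checks $S_T$ is regular in $R_T$. If $M$ is $S$-divisible then $M_T$ is $S_T$-divisible, and Proposition~\ref{s-inj-baer-di} (applied over $R_T$) gives that $M_T$ is $u$-$S_T$-injective. If $M$ is $S$-torsion-free then $M_T$ is $S_T$-torsion-free (an easy check using that elements of $S_T$ act via nonzero-divisors from $S$), and Proposition~\ref{s-inj-baer-sf} over $R_T$ applies. Either way $M_T$ is $u$-$S_T$-injective, as claimed.

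The main obstacle I expect is the second step: making the comparison between $\Ext_R^1(R/I,M)_T$ and $\Ext_{R_T}^1((R/I)_T, M_T)$ genuinely \emph{uniform}, i.e. controlled by a single $s\in S$ independent of $I$. Proposition~\ref{supf-usiso} only handles $\Hom$, so the dimension shift through the finitely presented approximation of $R/I$ must be done carefully, tracking that the annihilating element $s$ coming from $u$-$S$-Noetherianity and from \cite[Lemma 4.2]{QKWCZ21} can be chosen once and for all; the $u$-$S$-Five Lemma is the right tool but one must verify its hypotheses (which maps are $u$-$S$-mono/epi, with a common $s$) at each application. A minor secondary point is checking that $S_T$ inherits regularity from $S$, which is needed to apply Proposition~\ref{s-inj-baer-sf} in the $S$-torsion-free case.
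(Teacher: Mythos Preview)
Your proposal is correct and follows essentially the same route as the paper. The paper streamlines your second step by applying Proposition~\ref{supf-usiso} directly to the ideal $I$ (which is $u$-$S$-finitely presented with respect to a fixed $s$ by Lemma~\ref{S-Noe-usfp}) in the sequence $0\to I\to R\to R/I\to 0$; since $\phi_R$ is an honest isomorphism and $\Ext^1_R(R,-)=0$, the $u$-$S$-Five Lemma immediately gives that $\Ext^1_R(R/I,M)_T\to\Ext^1_{R_T}(R_T/I_T,M_T)$ is a $u$-$S$-isomorphism with respect to that same $s$, dissolving the uniformity concern you flagged without the extra approximation of $R/I$.
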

\begin{proof} We may assume that $R$ is  $u$-$S$-Noetherian and $M$ is  $u$-$S$-injective both respect  to some $s\in S$.	 Let $J$ be ideal of $R_T$. Then $J=I_T$ for some ideal $I$ of $R$. Consider the following commutative diagram of exact sequences:
$$\xymatrix@R=20pt@C=10pt{
 \Hom_R(R,M)_T\ar[d]_{\phi_{R}}^{\cong}\ar[r]^{} &\Hom_R(I,M)_T\ar[d]^{\phi_I}\ar[r]^{}&\Ext^1_R(R/I,M)_T\ar[r]^{}\ar[d]^{\phi^1_{R/I}}&\Ext^1_R(R,M)_T =0\\
\Hom_{R_T}(R_T,M_T)\ar[r]^{}  &\Hom_{R_T}(I_T,M_T) \ar[r]^{} & \Ext^1_{R_T}(R_T/I_T,M_T) \ar[r]^{} & \Ext^1_{R_T}(R_T,M_T)=0.\\}$$
By assumption $R$ is a  $u$-$S$-Noetherian ring with respect to $s$, so it is also an $S$-Noetherian ring. Then $I$ is $u$-$S$-finitely presented with respect to $s$ by the proof of Lemma \ref{S-Noe-usfp}. Thus, $\phi_I$ is a $u$-$S$-isomorphism with respect to $s$  by Proposition \ref{supf-usiso}. Hence $\phi^1_{R/I}$ is also a $u$-$S$-isomorphism with respect to  $s\in S$  by Theorem \ref{s-5-lemma}. Since $M$ is a $u$-$S$-injective $R$-module, $\Ext^1_R(R/I,M)=0$ is $u$-$S$-torsion $R$-module with respect to $s$. So $\Ext^1_R(R/I,M)_T=0$, and hence $\Ext^1_{R_T}(R_T/I_T,M_T)$ are  $u$-$S_T$-torsion $R_T$-module with respect to some fixed $t\in S_T$. Thus $M_T$ is a $u$-$S_T$-injective $R_T$-module by Proposition \ref{s-inj-baer-di} or Proposition \ref{s-inj-baer-sf}.
\end{proof}

\begin{remark} We don't know whether the Baer's Criterion for $u$-$S$-injective modules  holds in general:
	
Let $R$ be a ring, $S$ a  multiplicative subset of $R$ and $E$ an $R$-module. Suppose there exists $s\in S$ such that  $s\Ext_R^1(R/I,E)=0$ for any ideal $I$ of $R$. Is $E$  $u$-$S$-injective?

So whether the condition ``$M$ is $S$-torsion-free or $S$-divisible'' in Theorem \ref{usn-loc}  can be deleted?

\end{remark}

\section{The localization of $u$-$S$-absolutely pure modules over $u$-$S$-coherent rings}

Let $R$ be a ring and $S$ be a multiplicative subset of $R$. Recall from \cite{z23usabs} that a short $u$-$S$-exact sequence $0\rightarrow A\rightarrow B\rightarrow C\rightarrow 0$ of $R$-modules is said to be  \emph{ $u$-$S$-pure} provided that for any $R$-module $M$, the induced sequence $0\rightarrow M\otimes_RA\rightarrow M\otimes_RB\rightarrow M\otimes_RC\rightarrow 0$ is also $u$-$S$-exact. An $R$-module $E$ is said to be  \emph{$u$-$S$-absolutely pure} (abbreviates uniformly $S$-absolutely pure) provided that any short $u$-$S$-exact sequence $0\rightarrow E\rightarrow B\rightarrow C\rightarrow 0$ beginning with $E$ is $u$-$S$-pure.

\begin{theorem}\label{c-s-abp}\cite[Theorem 3.2]{z23usabs}
	Let $R$ be a ring, $S$ a multiplicative subset of $R$ and $E$ an $R$-module.
	Then the following statements are equivalent:
	\begin{enumerate}
		\item $E$ is $u$-$S$-absolutely pure;
		\item  any short exact sequence $0\rightarrow E\rightarrow B\rightarrow C\rightarrow 0$ beginning with $E$ is $u$-$S$-pure;
		\item  $E$ is a $u$-$S$-pure submodule in every $u$-$S$-injective module containing $E$;
		\item  $E$ is a $u$-$S$-pure submodule in every injective module containing $E$;
		\item  $E$ is a $u$-$S$-pure submodule in its injective envelope;
		\item there exists an element $s\in S$ satisfying that  for any finitely presented $R$-module $N$, $\Ext_R^1(N,E)$ is  $u$-$S$-torsion with respect to $s$;
		\item  there exists an element $s\in S$ satisfying that  if  $P$ is finitely generated projective, $K$ is a finitely generated submodule of $P$ and  $f:K\rightarrow E$ is an  $R$-homomorphism, then there is an $R$-homomorphism $g:P\rightarrow E$ such that $sf=gi$.
	\end{enumerate}
\end{theorem}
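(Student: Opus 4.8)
The plan is to prove the chain of implications $(1)\Ra(2)\Ra(3)\Ra(4)\Ra(5)\Ra(6)$, the equivalence $(6)\Lra(7)$, and finally $(6)\Ra(1)$, which closes the loop. The technical backbone, which I would set up first, is a Hom--tensor dictionary for $u$-$S$-purity: a short $u$-$S$-exact sequence $0\ra A\ra B\xrightarrow{g}C\ra 0$ is $u$-$S$-pure if and only if there is a single $s\in S$ such that $s\Coker(\Hom_R(F,B)\xrightarrow{g_{\ast}}\Hom_R(F,C))=0$ for every finitely presented $R$-module $F$. For $s=1$ this is the classical description of purity via finitely presented test modules. For the ``if'' half of the $u$-$S$-version I would write $C=\varinjlim C_j$ with each $C_j$ finitely presented, pull the sequence back to $\mathcal{E}_j:0\ra A\ra B_j\ra C_j\ra 0$, observe that the uniform lifting of $s\cdot(C_j\ra C)$ along $g$ makes $\mathcal{E}_j$ $u$-$S$-split with respect to $s$ and hence $u$-$S$-pure, and then pass to the direct limit, since a direct limit of $u$-$S$-exact sequences sharing a common witness $s$ is again $u$-$S$-exact with witness $s$. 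The ``only if'' half, and more generally the reconciliation of the a priori ``module by module'' purity condition with this uniform one, rests on the same circle of ideas together with the $u$-$S$-Five Lemma (Theorem~\ref{s-5-lemma}).

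Granting the dictionary, the first four implications are formal. An honest short exact sequence is $u$-$S$-exact, so $(1)\Ra(2)$. If $E$ is a submodule of a $u$-$S$-injective module $Q$, then $0\ra E\ra Q\ra Q/E\ra 0$ is short exact and begins with $E$, hence $u$-$S$-pure by $(2)$; this gives $(2)\Ra(3)$. Every injective module is $u$-$S$-injective by Theorem~\ref{s-inj-ext}, giving $(3)\Ra(4)$, and the injective envelope $E(E)$ is one such module, giving $(4)\Ra(5)$.

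For $(5)\Ra(6)$, let $s$ witness via the dictionary that $E$ is $u$-$S$-pure in $E(E)$. For a finitely presented module $N$, apply $\Hom_R(N,-)$ to $0\ra E\ra E(E)\ra E(E)/E\ra 0$; since $\Ext_R^1(N,E(E))=0$, the connecting homomorphism identifies $\Ext_R^1(N,E)$ with $\Coker(\Hom_R(N,E(E))\ra\Hom_R(N,E(E)/E))$, which is annihilated by $s$. The equivalence $(6)\Lra(7)$ is presentation bookkeeping: for a finitely presented $N$ choose $0\ra K\ra P\ra N\ra 0$ with $P$ finitely generated free and $K$ finitely generated, so that $\Ext_R^1(P,E)=0$ identifies $\Ext_R^1(N,E)$ with $\Coker(\Hom_R(P,E)\ra\Hom_R(K,E))$; hence ``$s\,\Ext_R^1(N,E)=0$ for all finitely presented $N$'' says exactly that $sf$ extends from $K$ to $P$ for every $R$-homomorphism $f:K\ra E$, while a finitely generated projective $P$ is absorbed by realizing it as a direct summand of a finite free module.

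Finally, for $(6)\Ra(1)$, let $0\ra E\xrightarrow{i}B\xrightarrow{g}C\ra 0$ be $u$-$S$-exact with $s_0\in S$ a witness of its $u$-$S$-exactness; by the dictionary it suffices to produce one $s\in S$ with $s\Coker(g_{\ast})=0$ on every finitely presented test module. Given $h:F\ra C$ with $F$ finitely presented, choose $0\ra K\ra P\xrightarrow{\rho}F\ra 0$ with $P$ finitely generated free, lift $s_0 h\rho$ (whose image lies in $\Im(g)$, since $s_0\Coker(g)=0$) along $g$ to $\phi:P\ra B$, and chase: $\phi(K)\subseteq\Ker(g)$ and $s_0\phi(K)\subseteq\Im(i)$, while $\Ker(i)$ being killed by $s_0$ lets one further power of $s_0$ lift $\phi|_K$ into $E$; then $(7)$ supplies a uniform $s_1$ and a map $\psi:P\ra E$ making $s_1 s_0^{2}\phi|_K$ factor through $i$, so that $s_1 s_0^{2}\phi-i\psi$ vanishes on $K$, factors through $F$, and after one last multiplication by $s_0$ produces a lift of $s_1 s_0^{4}h$ along $g$. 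Hence $s_1 s_0^{4}\Coker(g_{\ast})=0$ and the dictionary yields $u$-$S$-purity, that is, $(1)$. The step I expect to be the real obstacle is the dictionary itself, and inside it the fact that the a priori non-uniform notion of $u$-$S$-purity forces a single $s\in S$ to govern the Hom-lifting condition for all finitely presented test modules at once; the pullback and direct-limit reduction to $u$-$S$-split sequences, together with the $u$-$S$-Five Lemma, is what makes this uniformity available, and one must also keep careful track of how powers of $s_0$ accumulate in $(6)\Ra(1)$ so that the final witnessing element still lies in $S$.
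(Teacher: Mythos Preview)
The paper does not supply its own proof of this theorem: it is quoted verbatim as \cite[Theorem 3.2]{z23usabs} and used as a black box in the proof of Theorem~\ref{usc-loc}. There is therefore nothing in this paper to compare your argument against.

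For what it is worth, your outline follows the natural route one would expect in the cited source: the easy implications $(1)\Ra(2)\Ra(3)\Ra(4)\Ra(5)$, the Ext identification for $(5)\Ra(6)$ and $(6)\Lra(7)$, and then the substantive return $(6)\Ra(1)$ via finitely presented test modules. You have also correctly located the genuine content, namely the ``dictionary'' asserting that the module-by-module tensor definition of $u$-$S$-purity is governed by a \emph{single} $s\in S$ on the Hom side against all finitely presented modules. Your sketch of that step (writing $C$ as a filtered colimit of finitely presented modules, pulling back to obtain $u$-$S$-split sequences with a common witness, and passing to the limit) is the right idea, but as written it only handles the direction ``uniform Hom-lifting $\Rightarrow$ $u$-$S$-pure''; the converse --- extracting a uniform $s$ from the a priori non-uniform tensor condition --- is precisely what makes $(5)\Ra(6)$ nontrivial, and you defer it to ``the same circle of ideas together with the $u$-$S$-Five Lemma'' without indicating how uniformity is actually produced. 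If you want a self-contained proof, that is the step to write out in full.
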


Early in 1982, Couchot \cite{C82} proved that the absolutely pure properties over coherent rings persists under localization. Next, we extend it to $u$-$S$-torsion theories.

\begin{theorem}\label{usc-loc} Let $R$ be a  $u$-$S$-coherent ring and $T$ a multiplicative subset of $R$. If $R$-module $N$ is  $u$-$S$-absolutely pure, then $N_T$ is a $u$-$S_T$-absolutely pure $R_T$-module.
\end{theorem}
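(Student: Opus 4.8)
The plan is to use the characterization of $u$-$S$-absolutely pure modules from Theorem \ref{c-s-abp}(6): an $R$-module $E$ is $u$-$S$-absolutely pure if and only if there is a fixed $s\in S$ such that $\Ext_R^1(N,E)$ is $u$-$S$-torsion with respect to $s$ for every finitely presented $R$-module $N$. So, fixing $s\in S$ witnessing simultaneously that $R$ is $u$-$S$-coherent and that $N$ is $u$-$S$-absolutely pure, I want to show $\Ext_{R_T}^1(M,N_T)$ is $u$-$S_T$-torsion (with respect to $s/1$) for every finitely presented $R_T$-module $M$.

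First I would reduce to the case $M = (R_T/J)$ with $J$ a finitely generated ideal of $R_T$, or more precisely to $M = P'/K'$ with $P'$ finitely generated projective over $R_T$ and $K'$ finitely generated; by the standard descent of finitely generated data along localization, $M \cong P_T/K_T$ for some finitely generated projective $R$-module $P$ and finitely generated submodule $K \subseteq P$. Since $R$ is $u$-$S$-coherent with respect to $s$, every finitely generated ideal, hence every finitely generated submodule $K$ of a finitely generated projective $P$ (this requires a small argument, but $u$-$S$-coherence of $R$ gives $u$-$S$-finite presentation of finitely generated submodules of finitely generated frees, and one passes to direct summands), is $u$-$S$-finitely presented with respect to $s$. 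Therefore the finitely presented-over-$R_T$ module $M = P_T/K_T$ is, up to $u$-$S_T$-isomorphism, the localization of the $u$-$S$-finitely presented $R$-module $P/K$.

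Next, apply Proposition \ref{supf-usiso}: since $P/K$ is $u$-$S$-finitely presented, the natural map $\phi_{P/K}\colon \Hom_R(P/K, N)_T \to \Hom_{R_T}((P/K)_T, N_T)$ is a $u$-$S$-isomorphism. I then want the analogous statement one step up, namely that the natural map $\Ext_R^1(P/K, N)_T \to \Ext_{R_T}^1((P/K)_T, N_T)$ is a $u$-$S$-isomorphism. This follows by the argument in the proof of Proposition \ref{supf-usiso}: write the $u$-$S$-finite presentation $0\to T_1\to F\xrightarrow{f} P/K\to T_2\to 0$ with $F$ finitely presented and $sT_1=sT_2=0$, break it into short exact sequences, compare the two $\Ext$-$\Hom$ long exact sequences under localization, use that $\phi_F$ and $\phi_F^1$ are isomorphisms because $F$ is finitely presented (over a ring, $\Ext^1$ commutes with localization for finitely presented modules), use that $\Ext_R^n(T_i,N)$ and $\Ext_{R_T}^n((T_i)_T, N_T)$ are all $u$-$S$- resp. $u$-$S_T$-torsion by \cite[Lemma 4.2]{QKWCZ21}, and invoke the $u$-$S$-Five Lemma (Theorem \ref{s-5-lemma}). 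Since $N$ is $u$-$S$-absolutely pure with respect to $s$ and $P/K$ is (up to a harmless replacement) finitely presented as far as the relevant Ext-vanishing is concerned — more carefully, $\Ext_R^1(P/K,N)$ is $u$-$S$-torsion because $P/K$ is $u$-$S$-finitely presented and one reduces along $T_1,T_2$ to the finitely presented $F$ — we get $\Ext_R^1(P/K,N)$ is $u$-$S$-torsion, hence $\Ext_R^1(P/K,N)_T$ is $u$-$S_T$-torsion, hence via the $u$-$S$-isomorphism above $\Ext_{R_T}^1(M,N_T)$ is $u$-$S_T$-torsion (the $u$-$S_T$-isomorphism from $M$ to $(P/K)_T$ contributes only another $u$-$S_T$-torsion error). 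Then Theorem \ref{c-s-abp} gives that $N_T$ is $u$-$S_T$-absolutely pure.

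The main obstacle I expect is the uniformity of the bound: I must make sure that the single element $s$ (or a fixed power such as $s^2$ or $s^3$) works simultaneously for all finitely presented $R_T$-modules $M$, rather than an $M$-dependent one. This requires that $u$-$S$-coherence of $R$ supplies a $u$-$S$-finite presentation of every finitely generated submodule of a finitely generated projective with one fixed $s$ (the definition literally only mentions finitely generated ideals, so a short lemma extending this to submodules of finitely generated frees/projectives is needed, analogous to Lemma \ref{S-Noe-usfp}), and that each application of Proposition \ref{supf-usiso} and of the $u$-$S$-Five Lemma degrades the bound only by a fixed power independent of $M$. A secondary technical point is checking that $\phi^1$ — the comparison map on $\Ext^1$ — is genuinely natural and fits into the commuting ladder needed for the $u$-$S$-Five Lemma; this is routine but must be stated.
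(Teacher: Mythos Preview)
Your overall strategy---use Theorem \ref{c-s-abp}(6), descend a finitely presented $R_T$-module to a finitely presented $R$-module $M=P/K$, and transport the $u$-$S$-torsion property of $\Ext^1_R(M,N)$ through the localization comparison map---is exactly the paper's. The genuine gap is in how you try to show that
\[
\phi^1_M:\Ext^1_R(M,N)_T\longrightarrow \Ext^1_{R_T}(M_T,N_T)
\]
is a $u$-$S$-isomorphism. You assert that for $F$ finitely presented, $\phi^1_F$ is an \emph{isomorphism} because ``over a ring, $\Ext^1$ commutes with localization for finitely presented modules.'' This is false in general: writing $0\to K_F\to R^n\to F\to 0$ with $K_F$ finitely generated, the map $\phi_{K_F}:\Hom_R(K_F,N)_T\to\Hom_{R_T}((K_F)_T,N_T)$ is injective but need not be surjective when $K_F$ is not finitely presented (i.e.\ over a non-coherent ring), and its cokernel feeds directly into $\Coker(\phi^1_F)$. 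So your bootstrap through a $u$-$S$-finite presentation of $P/K$ collapses, because the claimed anchor $\phi^1_F$ is not available. (Also note that since $P/K$ is already genuinely finitely presented, the natural $u$-$S$-finite presentation has $F=P/K$ and $T_1=T_2=0$, making the argument circular anyway.)

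The repair is precisely where the $u$-$S$-coherence hypothesis is actually used, and it is what the paper does: work instead with the short exact sequence $0\to K\to P\to M\to 0$ (your own $P$ finitely generated free, $K$ finitely generated). Because $R$ is $u$-$S$-coherent with respect to $s$, the module $K$ is $u$-$S$-finitely presented with respect to $s$, so Proposition \ref{supf-usiso} applies to $K$ at the $\Hom$ level and gives that $\phi_K$ is a $u$-$S$-isomorphism. Now build the ladder
\[
\xymatrix@C=12pt{
\Hom_R(P,N)_T\ar[r]\ar[d]^{\cong}_{\phi_P} & \Hom_R(K,N)_T\ar[r]\ar[d]^{\phi_K} & \Ext^1_R(M,N)_T\ar[r]\ar[d]^{\phi^1_M} & \Ext^1_R(P,N)_T=0\\
\Hom_{R_T}(P_T,N_T)\ar[r] & \Hom_{R_T}(K_T,N_T)\ar[r] & \Ext^1_{R_T}(M_T,N_T)\ar[r] & \Ext^1_{R_T}(P_T,N_T)=0
}
\]
with $\phi_P$ an honest isomorphism (since $P$ is finitely generated free) and both right-hand terms zero (since $P$ and $P_T$ are projective). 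The $u$-$S$-Five Lemma (Theorem \ref{s-5-lemma}) then yields that $\phi^1_M$ is a $u$-$S$-isomorphism with respect to a fixed power of $s$, independent of $M$. From there your final paragraph goes through: $\Ext^1_R(M,N)$ is $u$-$S$-torsion with respect to $s$ by Theorem \ref{c-s-abp}(6), hence so is $\Ext^1_{R_T}(M_T,N_T)$ with respect to $s/1$, and $N_T$ is $u$-$S_T$-absolutely pure. Your concern about uniformity is handled automatically by this route, since the only inputs are the fixed $s$ from $u$-$S$-coherence and from the $u$-$S$-absolute purity of $N$.
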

\begin{proof} We may assume that $R$ is  $u$-$S$-coherent  and $N$ is  $u$-$S$-absolutely pure both with respect to $s\in S$. Let $M_T$ be any finitely presented $R_T$-module with $M$ a finitely generated  $R$-module. Consider the exact sequence $0\rightarrow K\rightarrow F\rightarrow M\rightarrow 0$ where $F$ is a finitely generated free $R$-module. Then $K_T$ is a finitely generated $R_T$-module. So  we may assume $K$ is finitely generated $R$, and hence $M$ is  finitely presented. Since $R$ is a  $u$-$S$-coherent ring (with respect to $s$), then $F$ is a $u$-$S$-coherent $R$-module  (with respect to $s$). Hence  $K$ is  $u$-$S$-finitely presented (with respect to $s$). Consider the following exact sequences with rows exact:
$$\xymatrix@R=20pt@C=10pt{
		\Hom_R(F,N)_T\ar[d]_{\phi_{F}}^{\cong}\ar[r]^{} &\Hom_R(K,N)_T\ar[d]^{\phi_K}\ar[r]^{}&\Ext^1_R(M,N)_T\ar[r]^{}\ar[d]^{\phi^1_{M}}&\Ext^1_R(F,N)_T =0\\
		\Hom_{R_T}(F_T,N_T)\ar[r]^{}  &\Hom_{R_T}(K_T,N_T) \ar[r]^{} & \Ext^1_{R_T}(M_T,N_T) \ar[r]^{} & \Ext^1_{R_T}(F_T,N_T)=0.\\}$$
By proposition \ref{supf-usiso}, $\phi_K$ is a $u$-$S$-isomorphism (with respect to $s$). So $\phi^1_{M}$ is also a $u$-$S$-isomorphism (with respect to $s$) by Theorem \ref{s-5-lemma}. Since  $N$ is  $u$-$S$-absolutely pure (with respect to $s$), $\Ext^1_R(M,N)$  is a $u$-$S$-torsion $R$-module (with respect to $s$) by Theorem \ref{c-s-abp}. Thus 
$\Ext^1_R(M,N)_T$, and hence $\Ext^1_{R_T}(M_T,N_T)$ is a $u$-$S_T$-torsion $R_T$-module (with respect to $\frac{s}{1}$). Consequently, $N_T$ is a $u$-$S_T$-absolutely pure $R_T$-module (with respect to $\frac{s}{1}$) by Theorem \ref{c-s-abp}.
\end{proof}

The rest of this section is to give some counter-examples to some questions proposed in \cite{B24}. The author in \cite{B24} first extended the classical result in \cite[Lemma 3.94]{l99}.
\begin{lemma}\cite[Lemma 5.5]{B24}
	Let $M$ and $N$ be right $R$-modules, and $P$ be an $R$-module such that $_RP_R=u_1R+\cdots+u_k R$ with $Ru_i=u_iR$ for each $i$. If $M_R\subseteq_e N_R$, then $\Hom_R(_RP_R,M_R)\subseteq_e \Hom_R(_RP_RN_R)$. 
\end{lemma}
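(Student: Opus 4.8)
My plan is to reduce the assertion to the cyclic case $k=1$ and then dispatch that case by a direct argument that invokes the two inclusions packaged in the hypothesis $Ru_i=u_iR$. Throughout I will use the element-wise description of essentiality for right modules — $X\subseteq_e Y$ iff every nonzero $y\in Y$ admits some $r\in R$ with $0\neq yr\in X$ — together with two standard facts: (i) a finite direct sum of essential extensions is essential, and (ii) if $X\subseteq_e Y$ and $Z$ is a submodule of $Y$, then $X\cap Z\subseteq_e Z$. Recall also that $\Hom_R(P,L)$ is a right $R$-module through the \emph{left} $R$-action on $P$, via $(fr)(p)=f(rp)$, and that $\Hom_R(-,L)$ is left exact.

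First I record that $Ru_i=u_iR$ makes each $u_iR$ an $(R,R)$-subbimodule of $P$: from $ru_i\in Ru_i=u_iR$ one gets $r\cdot(u_is)\in u_iR$. Hence the summation map $\sigma\colon\bigoplus_{i=1}^{k}u_iR\to P$ is an epimorphism of bimodules, and applying $\Hom_R(-,L)$ for $L\in\{M,N\}$ produces right $R$-module embeddings $\Hom_R(P,L)\hookrightarrow\bigoplus_i\Hom_R(u_iR,L)$. A short computation with $\ker\sigma$ then identifies $\Hom_R(P,M)$, viewed inside $\bigoplus_i\Hom_R(u_iR,N)$, with $$\Big(\bigoplus_i\Hom_R(u_iR,M)\Big)\cap\Hom_R(P,N).$$ By (i) and (ii) it therefore suffices to prove the single-generator case: for $u\in R$ with $Ru=uR$ one has $\Hom_R(uR,M)\subseteq_e\Hom_R(uR,N)$.

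For the cyclic case, take $0\neq g\in\Hom_R(uR,N)$ and set $n:=g(u)$; since $g(uR)=g(u)R$, necessarily $n\neq 0$. As $M\subseteq_e N$, choose $r\in R$ with $0\neq nr\in M$. Here is the single place where the inclusion $uR\subseteq Ru$ enters: pick $r'\in R$ with $r'u=ur$. Then $gr'\in\Hom_R(uR,N)$ satisfies $(gr')(u)=g(ur)=g(u)r=nr\neq 0$, so $gr'\neq 0$, while $(gr')(uR)=(nr)R\subseteq M$ because $nr\in M$; thus $0\neq gr'\in\Hom_R(uR,M)$, and since $g$ was arbitrary, $\Hom_R(uR,M)\subseteq_e\Hom_R(uR,N)$. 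Reassembling via (i) and (ii) completes the proof, and no noetherian or chain condition on $R$ is used. I do not anticipate a genuine obstacle; the only real care needed is bookkeeping the \emph{right} $R$-module structures through the identifications $\Hom_R(\bigoplus_i u_iR,L)\cong\bigoplus_i\Hom_R(u_iR,L)$ and the intersection formula above, and invoking each half of $Ru_i=u_iR$ in its proper place — $Ru_i\subseteq u_iR$ to know $u_iR$ is a subbimodule (so $\Hom_R(-,L)$ lands in a direct sum of right modules), and $u_iR\subseteq Ru_i$ to transport a right scalar acting on $g(u)$ into the argument of $g$.
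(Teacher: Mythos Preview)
The paper does not supply its own proof of this lemma; it merely quotes the statement from \cite[Lemma 5.5]{B24} in order to set up the counterexample to \cite[Question 5.6]{B24}. There is therefore nothing in the paper to compare your argument against.

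That said, your proof is correct and self-contained. The reduction to the cyclic case via the bimodule epimorphism $\sigma\colon\bigoplus_i u_iR\to P$, the identification $\Hom_R(P,M)=\bigl(\bigoplus_i\Hom_R(u_iR,M)\bigr)\cap\Hom_R(P,N)$ inside $\bigoplus_i\Hom_R(u_iR,N)$, and the use of facts (i) and (ii) are all sound. In the cyclic case your computation $(gr')(us)=g(r'us)=g(urs)=(nr)s$ is exactly right, and it is pleasing that you isolate where each inclusion in $Ru=uR$ is used. One cosmetic slip: you write ``for $u\in R$'' at the start of the cyclic case, but $u$ lives in the bimodule $P$, not in $R$; nothing in your argument actually uses $u\in R$, so this is only a typo.
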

Then he asked the following question:\\
\textbf{Question 2} \cite[Question 5.6]{B24}
Suppose  $R\subseteq E$ be ring extension and $R$ satisfies $Es'\subseteq u_1R+\cdots+u_kR\subseteq E$  for some $s'\in S$ with $Ru_i=u_iR$ for each $i$. If $M_R\subseteq_e N_R$, is it true that  $\Hom_R(_EE_R,M_R)\subseteq_e \Hom_R(_EE_R,N_R)$. 

Now, we give a counter-example to this question for commutative rings.

\begin{example} Let $R=\mathbb{Z}\times\mathbb{Q}$, $E=\mathbb{Q}\times\mathbb{Q}$ and $S=\{(0,1),(1,1)\}\subseteq R.$ Then $(0,1)E\subseteq R\subseteq E$. Let $M=\mathbb{Z}\times 0$ and $N=\mathbb{Q}\times 0$. Then $M\subseteq_e N$ as $R$-modules. But $0=\Hom_R(_EE_R,M_R)\subseteq \Hom_R(_EE_R,N_R)\cong \mathbb{Q}\times 0$ is not an essential extension as $E$-modules.
\end{example}

The author in \cite{B24} also consider the 
Hom-property of injective modules.

\begin{corollary}\cite[Corollary 5.8(2)]{B24}  Suppose  $R\subseteq E$ be ring extension and $R$ satisfies $E= u_1R+\cdots+u_kR$ with $Ru_i=u_iR$ for each $i$. Set $M_R=L_R\oplus Q_R$.
If $sE=Es$ for each $s\in S$, then  $M_R$ is injective if and only if $\Hom_R(_EE_R,M_R)$ is injective as $E$-module.
\end{corollary}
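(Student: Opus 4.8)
Here is a plan for proving the final statement (Corollary 5.8(2) of \cite{B24}).

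The plan is to prove the two implications separately. The decomposition $M_R=L_R\oplus Q_R$ plays no role, so I argue for an arbitrary right $R$-module $M$; the hypothesis ``$sE=Es$ for each $s\in S$'' is not needed for this statement (it belongs to the $S$-injective variant), while the finiteness assumption $E=u_1R+\cdots+u_kR$ with $Ru_i=u_iR$ enters exactly once, through \cite[Lemma 5.5]{B24}.

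For the implication ``$M_R$ injective $\Rightarrow\Hom_R({}_EE_R,M_R)$ injective over $E$'' the plan is a standard adjunction argument: for every right $E$-module $X$, the tensor--hom adjunction together with the canonical identification $X\otimes_EE\cong X$ of right $R$-modules yields a natural isomorphism $\Hom_E\big(X,\Hom_R({}_EE_R,M_R)\big)\cong\Hom_R(X|_R,M_R)$, where $X|_R$ denotes $X$ regarded as a right $R$-module. Restriction of scalars is exact and $\Hom_R(-,M_R)$ is exact because $M_R$ is injective, so $\Hom_E\big(-,\Hom_R({}_EE_R,M_R)\big)$ is exact; that is, $\Hom_R({}_EE_R,M_R)$ is an injective right $E$-module.

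For the converse, assume $H:=\Hom_R({}_EE_R,M_R)$ is an injective right $E$-module, and let $N:=E_R(M)$ be the injective envelope of $M_R$, so $M\subseteq_e N$ as right $R$-modules. First I would apply \cite[Lemma 5.5]{B24} with $P=E$ to obtain $\Hom_R({}_RE_R,M_R)\subseteq_e\Hom_R({}_RE_R,N_R)$ as right $R$-modules; since every $E$-submodule is in particular an $R$-submodule, this is also an essential extension of right $E$-modules, that is, $H\subseteq_e\Hom_R({}_EE_R,N_R)$. By the first implication applied to the injective module $N_R$, the module $\Hom_R({}_EE_R,N_R)$ is injective over $E$; as $H$ is an injective $E$-module essential in it, the inclusion is forced to be an equality $H=\Hom_R({}_EE_R,N_R)$. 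Finally I would exploit this equality through the evaluation-at-$1$ map $\rho\colon\Hom_R({}_EE_R,N_R)\to N$, $f\mapsto f(1)$: it is right $R$-linear, it carries $H=\Hom_R({}_EE_R,M_R)$ into $M$, and it is surjective onto $N$ because $N$ is an injective $R$-module (every $R$-homomorphism $R_R\to N_R$ extends along the inclusion $R\hookrightarrow E$). Combining these, $N=\rho\big(\Hom_R({}_EE_R,N_R)\big)=\rho(H)\subseteq M\subseteq N$, whence $M=N=E_R(M)$ is injective.

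The argument is essentially formal once \cite[Lemma 5.5]{B24} is granted; the step I expect to be the crux is the last one, namely recognising that surjectivity of the evaluation map $\rho$ is automatic precisely because $N$ is the injective envelope of $M$, so that the equality of the two Hom-modules over $E$ descends to $M=N$. A minor auxiliary point is that an extension which is essential as $R$-modules stays essential as $E$-modules, since passing to $E$ only shrinks the collection of submodules to be tested.
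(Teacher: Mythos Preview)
The paper does not actually prove this corollary: it is quoted verbatim from \cite{B24} only to set up Question~3 and the subsequent counterexample, so there is no proof in the present paper to compare against.

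Your proposal is nonetheless a correct proof. The forward implication is exactly the standard adjunction argument (this is \cite[Corollary~3.6B]{l99}, which the paper cites in passing). For the converse you invoke \cite[Lemma~5.5]{B24} to get $\Hom_R(E,M)\subseteq_e\Hom_R(E,N)$ over $R$, observe that essentiality persists over $E$ since $E$-submodules are $R$-submodules, use injectivity of both Hom-modules over $E$ to force equality, and then push this equality down to $M=N$ via the evaluation-at-$1$ map, whose surjectivity comes from injectivity of $N$. Each step is sound. You are also right that the decomposition $M=L\oplus Q$ and the hypothesis $sE=Es$ are irrelevant here; those conditions belong to the $S$-injective version in \cite[Corollary~5.8(1)]{B24}, not to part~(2).
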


Then he ask the following question:\\
\textbf{Question 3} \cite[Question 5.9]{B24} In the above corollary, 
can the condition  ``$E= u_1R+\cdots+u_kR$''  be reduced  to  ``$Es'\subseteq u_1R+\cdots+u_kR\subseteq E$  for some $s'\in S$''? 

Certainly, if  $M_R$ is an injective $R$-module, then $\Hom_R(_EE_R,M_R)$ is always injective as an $E$-module (see \cite[Corollary 3.6(B)]{l99}). But the converse is not true in general.

\begin{example} Let $R=\mathbb{Z}\times\mathbb{Q}$, $E=\mathbb{Q}\times\mathbb{Q}$ and $S=\{(0,1),(1,1)\}\subseteq R.$ Then $(0,1)E\subseteq R\subseteq E$. Let $M=\mathbb{Z}\times\mathbb{Q}$. Then $\Hom_R(_EE_R,M_R)=0\times\mathbb{Q}$ is injective $E$-modules. But $M$ is not injective as an $R$-module.
\end{example}

\end{document}